\begin{document} 
	
	\theoremstyle{plain} 
	\newtheorem{theorem}{Theorem}
	\newtheorem{lemma}{Lemma} 
	\newtheorem{corollary}{Corollary} 
	\newtheorem{proposition}{Proposition} 
	
	\theoremstyle{definition} 
	\newtheorem{definition}{Definition}
	\newcommand{\beal}{\[ \begin{aligned}} 

	\newcommand{\eeal}{ \end{aligned} \]} 
	\newcommand{\een}{\end{eqnarray}} 
\newcommand{\End}{\mathrm{End}}
\newcommand{\ol}{\overline}
\newcommand{\Perm}{\mathrm{Perm}}

\title[SLB-YBE]{Skew left braces and the Yang-Baxter equation} 
\author{Lindsay N. Childs\\June 1, 2022}

\begin{abstract} We give a self-contained proof that a skew left brace yields a solution of the Yang-Baxter equation.
	\end{abstract}
\maketitle
\section{Introduction}
A  skew left brace is a set $B = (B, \circ, \cdot)$ with two group operations that satisfy the single compatibility condition
\[ (\#) \ \ \  x \circ(y \cdot z) = (x \circ y) \cdot x^{-1} \cdot (x \circ z)\]
The inverse of $x$ in $(B, \circ)$ is denoted $\overline{x}$ and in $(B, \cdot)$ by $x^{-1}$. One easily checks from (\#) that the two groups $(B, \circ)$ and $(B, \cdot)$ share a common identity element, $1$.

Skew left braces play a significant role in Hopf-Galois theory (see [CGK...21]), but they were first defined in [GV17] to yield set-theoretic solutions of the Yang-Baxter equation.  Such a solution is a function $R: B \times B \to B \times B$ on a set $B$ that satisfies the equation

\[(*): \ \ (R \times id)(id \times R)(R \times id)(a, b, c) = (id \times R)(R \times id)(id \times R)(a, b, c).\]
for all $a, b, c$ in $B$. 

This equation has been a question of considerable interest among algebraists since 1990 (motivated by [Dr92]).  Solutions of the YBE have been constructed in various settings during the past 25 years (e. g. [LYZ03], [Ru07], [CJO14]), [BCJ16]) but the only general descriptions of how a skew left brace yields a solution to the YBE appear in [GV17]  and in [Ba18]. 
  
In this note we present a straightforward proof that a skew left brace yields a solution $R: B \times B\to B \times B$ of the form $R(x, y) = (\sigma_x(y), \tau_y(x))$ where $\sigma_x(y)= x^{-1}(x \circ y)$ is the well-known $\lambda$-function  (or $\gamma$-function) associated to a skew brace, and $\tau_y(x)$ is defined by the equation that $\sigma_x(y) \circ \tau_y(x) = x \circ y$. The only facts needed for the proof are that $\sigma_x(\sigma_y(z)) = \sigma_{x \circ y}(z)$ and $\tau_y(\tau_x(z)) = \tau_{x \circ y}(z)$, both of which we prove.  The proof of the $\sigma$-result is from [GV17].  The $\tau$-result does not appear in [GV17]; there is a proof in [Bac18], but the proof here was obtained independently of [Bac18]. For the convenience of the reader, the proof that our solution of the YBE works is entirely self-contained.  

\section{The proof}
Given a skew brace $B  = (B, \circ, \cdot)$ , define $\sigma_x: B \to B $ by
\[\sigma_x(y) = x^{-1} \cdot (x \circ y)\]  

Define 
\[ \tau_y(x) = \overline{\sigma_x(y)} \circ x \circ y = \overline{ x^{-1} \cdot (x \circ y)} \circ x \circ y. \]
Then for all $x$, $y$ in $B$, 	$\sigma_x$ and $\tau_y$  are one-to-one maps from $B$ to $B$, and by definition of $\tau_y(x)$, $\sigma_x(y) \circ \tau_y(x) = \sigma_x(y) \circ (\overline{\sigma_x(y)} \circ x \circ y) = x \circ y.$	
Define 
\[ R: B \times B \to B \times B\] 
by
\[R(a, b) = (\sigma_a(b), \tau_b(a)) = (\sigma_a(b), \overline{\sigma_a(b)}\circ a \circ b ). \]
Then for all $a, b$ in $B$, if $R(a, b) = (s, t)$, then $s \circ t = \sigma_a(b) \circ \tau_b(a) =  a \circ b$.   

We will prove:

\begin{theorem}\label{p3} If $B$ is a skew left brace and $R:  B \times B \to B \times B$ is defined by $R(a, b) = (\sigma_a(b), \tau_b(a))$ for $a, b$ in $B$, then $R$ is a solution of the Yang-Baxter equation: for all $a, b, c$ in $B$, 
\[(*): \ \ (R \times id)(id \times R)(R \times id)(a, b, c) = (id \times R)(R \times id)(id \times R)(a, b, c).\]
\end{theorem}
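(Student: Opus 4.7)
The plan is to prove the Yang-Baxter equation $(*)$ by reducing it, via two iteration laws, to an elementary cancellation argument in the group $(B,\circ)$.

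First I would expand both sides of $(*)$ using only the definition $R(x,y)=(\sigma_x(y),\tau_y(x))$. A direct three-step unfolding gives
\[
\mathrm{LHS} = \bigl(\sigma_{\sigma_a(b)}(\sigma_{\tau_b(a)}(c)),\ \tau_{\sigma_{\tau_b(a)}(c)}(\sigma_a(b)),\ \tau_c(\tau_b(a))\bigr),
\]
\[
\mathrm{RHS} = \bigl(\sigma_a(\sigma_b(c)),\ \sigma_{\tau_{\sigma_b(c)}(a)}(\tau_c(b)),\ \tau_{\tau_c(b)}(\tau_{\sigma_b(c)}(a))\bigr).
\]
The remaining task is to match the three coordinates.

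Next I would invoke the two iteration laws $\sigma_x(\sigma_y(z))=\sigma_{x\circ y}(z)$ and $\tau_y(\tau_x(z))=\tau_{x\circ y}(z)$, which the paper establishes from $(\#)$ and the definitions. These deliver the first and third coordinates immediately. For the first coordinate, the LHS collapses by the $\sigma$-law to $\sigma_{\sigma_a(b)\circ\tau_b(a)}(c)=\sigma_{a\circ b}(c)$, while the RHS equals $\sigma_{a\circ b}(c)$ by the same law; here I use the identity $\sigma_a(b)\circ\tau_b(a)=a\circ b$ already built into the setup. For the third coordinate, the LHS is $\tau_{b\circ c}(a)$ by the $\tau$-law, and the RHS equals $\tau_{\sigma_b(c)\circ\tau_c(b)}(a)=\tau_{b\circ c}(a)$ in the same way.

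For the middle coordinate I would avoid any direct expansion. Instead I would use the observation that $R$ preserves the $\circ$-product: whenever $R(x,y)=(s,t)$ one has $s\circ t=x\circ y$, so each application of $R\times\mathrm{id}$ or $\mathrm{id}\times R$ preserves the $\circ$-product $x_1\circ x_2\circ x_3$ of the three coordinates of a triple. Iterating three times on each side of $(*)$ shows that the $\circ$-product of the three coordinates of LHS and of RHS each equals $a\circ b\circ c$. Since the first and third coordinates on the two sides already agree, left- and right-cancellation in $(B,\circ)$ forces the middle coordinates to agree as well. The main obstacle is really the $\tau$-iteration law $\tau_y(\tau_x(z))=\tau_{x\circ y}(z)$, which the paper flags as the nontrivial input; the $\sigma$-law is essentially a direct rewriting from $(\#)$. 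Once both iteration laws are in hand, the Yang-Baxter equation drops out with only a cancellation argument, bypassing the combinatorially painful middle-coordinate identity $\tau_{\sigma_{\tau_b(a)}(c)}(\sigma_a(b))=\sigma_{\tau_{\sigma_b(c)}(a)}(\tau_c(b))$ entirely.
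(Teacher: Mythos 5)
Your proposal is correct and follows essentially the same route as the paper: match the first coordinate via the $\sigma$-homomorphism law, the third via the $\tau$-anti-homomorphism law, and recover the middle coordinate from the fact that $R$ preserves the $\circ$-product together with cancellation in $(B,\circ)$. The paper's proof is exactly this argument, written out with intermediate labels for the coordinates.
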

Since $ \sigma_a$ and $\tau_b$ are one-to-one maps from $B$ to $B$ for all $a$, $b$ in $B$, the solution $R$ of the Yang-Baxter equation is nondegenerate.

\begin{proof}
 Given a skew brace $B(\circ, \cdot)$,  the maps $ \sigma_x(y) = x^{-1} \cdot (x \circ y)$ and $\tau_y(x) = \overline{\sigma_x(y)} \circ x \circ y$  satisfy the following two properties, as we show below: 

	(i): $ \sigma $ is a homomorphism from $(B, \circ )$ to $\Perm(B)$:    for $x$, $y$, $z$ in $B$, 
	\[ \sigma_{x \circ y}(z) = \sigma_x( \sigma_y(z));\]

	(ii): $\tau $ is an anti-homomorphism from $(B, \circ)$ to $\Perm(B)$: \[\tau_{z \circ y}(x) = \tau_y(\tau_z(x)).\]
		
Beside these two properties, the only other property we need is that if $R(u, v) = (\sigma_u(v), \tau_v(u)) = (y, z)$, then $u \circ v = y \circ z$, as noted above.
	
	These three properties suffice to show that $R$ satisfies
	\[(R \times 1)(1 \times R)(R \times 1)(a, b, c)  = (1 \times R)(R \times 1)(1 \times R)(a, b, c)  \ \ (*),\]  
	for all $a, b, c$ in $B$, as follows.	
	
	The left side of (*) is:
	\[ (R \times 1)(1 \times R)(R \times 1)(a, b, c) = (R \times 1)(1 \times R)(d, e, c) = (R \times 1)(d, f, g) = (h, k, g) \]
	where
	\[ d = \sigma_a(b), \ e = \tau_b(a), \text{ so } a \circ b = d \circ e,\]
	
	\[\ f = \sigma_e(c), \ g = \tau_c(e), \text{ so } \ e \circ c = f  \circ g,\]
and
	\[\ h = \sigma_d(f), \ k = \tau_f(d),  \text{ so } 
 \ d \circ f = h \circ k.\]

	The right side of (*) is:
	\[ (1 \times R)(R \times 1)(1 \times R)(a, b, c) = (1 \times R)(R \times 1)(a, q, r) = (1 \times R)(s, t, r)= (s, v, w),\] 
	where 
	\[ q = \sigma_b(c), \ r = \tau_c(b),  \text{ so }  b \circ c = q \circ r,\] 
	\[ \ s = \sigma_a(q), \  t = \tau_q(a),  \text{ so }   a \circ q = s \circ t,\]
and \[\ v = \sigma_t(r), \ w = \tau_r(t), \text{ so } \ t \circ r = v \circ w .\]

	We want to show that $(h, k, g) = (s, v, w)$.
	
	To show that $h = s$ uses property (i): $\sigma_{y \circ z}(x) = \sigma_y(\sigma_z(x))$, as follows:
\[ \begin{aligned} s &= \sigma_a(q) =\sigma_a(\sigma_b(c)) = \sigma_{a  \circ b}(c);\\
	h &= \sigma_d(f) = \sigma_d(\sigma_e(c)) = \sigma_{d \circ e}(c); \eeal
	and 
	\[d \circ e = \sigma_a(b) \circ \tau_b(a) = a \circ b.\]  
	So 
	\[h = \sigma_{d \circ e}(c) = \sigma_{a \circ b}(c) = s.\]
	To show that $w = g$ uses property (ii): $\tau_{z \circ y}(x) = \tau_y(\tau_z(x))$, as follows: 
	\beal g &= \tau_c(e) = \tau_c(\tau_b(a)) = \tau_{b \circ c}(a);\\
	w &= \tau_r(t) = \tau_r(\tau_q(a)) = \tau_{q \circ r}(a) \eeal
	and 
	\[ q \circ r = \sigma_b(c) \circ \tau_c(b) = b \circ c.\]  So 
	\[w = \tau_{q \circ r}(a) = \tau_{b \circ c}(a)  = g.\]
	
	Finally, to show that $k = v$ we just use many times that for any $u, v$,  if $R(u, v) =   (m, n)$, then $m \circ n =  u \circ v$, as follows:
	
	The left side of equation (*) is  $(h, k, g)$; the right side is $(s, v, w)$, and using all of the equalities above, we have that 
\[ s \circ v \circ w = a \circ b \circ c =h \circ k \circ g :\]
For
\[  s \circ (v \circ w) = s \circ (\sigma_t(r) \circ \tau_r(t)) = s \circ (t \circ r)\]
\[ = (s \circ t) \circ r = (\sigma_a(q) \circ \tau_q(a)) \circ r = (a \circ q) \circ r \]
\[ = a \circ (q \circ r) =  a \circ (\sigma_b(c) \circ \tau_c(b)) = a \circ (b \circ c);\]
while
\[ (a \circ b) \circ c =  (\sigma_a(b) \circ \tau_b(a)) \circ c = (d \circ e)  \circ c\]
\[= d \circ (e \circ c) = d \circ (\sigma_e(c) \circ \tau_c(e)) = d \circ (f \circ g) \]
\[ = (d \circ f) \circ g = (\sigma_d(f) \circ \tau_f(d)) \circ g = (h \circ k) \circ g.\]
So  $s \circ v \circ w = h \circ k \circ g$.
	Since $w = g$, and $h = s$ in the group $(B, \circ)$, it follows that $k = v$.  
	Given properties (i) and (ii), that completes the proof.
\end{proof}

To prove properties (i) and (ii) we need the following consequence of the compatibility condition (\#) for a skew brace (c.f. [GV17], Lemma 1.7 (2)):

\begin{lemma}\label{p0} For all $a$, $b$ in $B$, $ a^{-1} \cdot (a \circ b^{-1}) \cdot a^{-1} =(a \circ b)^{-1}$.\end{lemma}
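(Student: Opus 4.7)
\medskip

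The plan is to apply the compatibility condition $(\#)$ with the cleverly chosen inputs $y = b$ and $z = b^{-1}$, so that the left-hand side collapses dramatically: $y \cdot z = b \cdot b^{-1} = 1$, and since $1$ is the common identity of $(B, \circ)$ and $(B, \cdot)$, the left-hand side becomes simply $a \circ 1 = a$. Substituting into $(\#)$ gives
\[ a = (a \circ b) \cdot a^{-1} \cdot (a \circ b^{-1}). \]

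Once this identity is in hand, the rest is pure group manipulation in $(B, \cdot)$. Right-multiplying both sides by $a^{-1}$ yields
\[ 1 = (a \circ b) \cdot a^{-1} \cdot (a \circ b^{-1}) \cdot a^{-1}, \]
which rearranges to
\[ a^{-1} \cdot (a \circ b^{-1}) \cdot a^{-1} = (a \circ b)^{-1}, \]
as desired.

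There is essentially no obstacle here. The only point that requires a moment's thought is justifying that $a \circ 1 = a$, which follows from the remark at the start of the introduction that $1$ is a common identity of the two group structures on $B$; this shared-identity fact is itself a straightforward consequence of $(\#)$ (set $x = y = z = 1$ in any of several ways). Apart from that, the proof is a one-line substitution followed by an elementary cancellation.
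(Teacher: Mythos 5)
Your proof is correct and follows essentially the same route as the paper: both substitute $y = b$, $z = b^{-1}$ into the compatibility condition $(\#)$, use that $1$ is the common identity so that $a \circ (b \cdot b^{-1}) = a$, and then rearrange in the group $(B,\cdot)$. The only cosmetic difference is that the paper first solves $(\#)$ for $x \circ z$ and then substitutes, whereas you substitute first and cancel afterward.
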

\begin{proof}   The compatibility condition (\#) for a skew brace is that for all $x, y, z$ in $B$,
	\[ x \circ (y \cdot z) =( x \circ y) \cdot x^{-1} \cdot (x \circ z),\]
	hence
	\[ x \cdot (x \circ y)^{-1} \cdot (x \circ (y \cdot z)) = x \circ z\]
	or
	\[ x \circ z = x \cdot (x \circ y)^{-1} \cdot (x \circ (y \cdot z)).\]
	Set $x = a, y = b, z = b^{-1}$ to get  
	\[ a \circ b^{-1} = a \cdot (a \circ b)^{-1} \cdot a,\]
	or
	\[ a^{-1} \cdot (a \circ b^{-1}) \cdot  a^{-1} = (a \circ b)^{-1}.\]
\end{proof}

Here is property (i):  it is Proposition 1.9 (2) of [GV17].

\begin{proposition}\label{p1}  For all $x, y, z$ in $B$,   
	\[\sigma_{x \circ y}(z) = \sigma_x(\sigma_y(z)).\] \end{proposition}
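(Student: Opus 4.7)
The plan is to expand both sides of $\sigma_{x\circ y}(z)=\sigma_x(\sigma_y(z))$ using the definition $\sigma_a(b)=a^{-1}\cdot(a\circ b)$, and then show the two expressions agree by one application of the compatibility condition (\#) and one application of Lemma \ref{p0}.

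First I would rewrite the left-hand side as
\[\sigma_{x\circ y}(z)=(x\circ y)^{-1}\cdot\bigl((x\circ y)\circ z\bigr)=(x\circ y)^{-1}\cdot\bigl(x\circ(y\circ z)\bigr),\]
using associativity of the group operation $\circ$. Next I would unfold the right-hand side one layer at a time:
\[\sigma_x(\sigma_y(z))=x^{-1}\cdot\bigl(x\circ\sigma_y(z)\bigr)=x^{-1}\cdot\bigl(x\circ(y^{-1}\cdot(y\circ z))\bigr).\]
Then I would apply the skew brace compatibility (\#) to the product $y^{-1}\cdot(y\circ z)$ sitting inside $x\circ(-)$, which gives
\[x\circ\bigl(y^{-1}\cdot(y\circ z)\bigr)=(x\circ y^{-1})\cdot x^{-1}\cdot\bigl(x\circ(y\circ z)\bigr),\]
so that
\[\sigma_x(\sigma_y(z))=x^{-1}\cdot(x\circ y^{-1})\cdot x^{-1}\cdot\bigl(x\circ(y\circ z)\bigr).\]

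At this point the key identification is the prefix $x^{-1}\cdot(x\circ y^{-1})\cdot x^{-1}$, which by Lemma \ref{p0} (with $a=x$, $b=y$) equals $(x\circ y)^{-1}$. Substituting turns the right-hand side into $(x\circ y)^{-1}\cdot\bigl(x\circ(y\circ z)\bigr)$, matching the left-hand side and finishing the proof. The only real obstacle is bookkeeping: making sure (\#) is applied in the correct direction and that the dot-group element produced in the middle is exactly the pattern that Lemma \ref{p0} recognizes. Everything else is mechanical unfolding of definitions.
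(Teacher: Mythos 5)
Your proposal is correct and follows essentially the same route as the paper's proof: expand $\sigma_x(\sigma_y(z))$ by definition, apply (\#) to $x \circ (y^{-1}\cdot(y\circ z))$, and recognize $x^{-1}\cdot(x\circ y^{-1})\cdot x^{-1} = (x\circ y)^{-1}$ via Lemma \ref{p0}. No gaps; the bookkeeping you describe is exactly what the paper does.
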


\begin{proof} (from [GV17])
	The right side of 
	\[ \sigma_{x \circ y}(z) = \sigma_x(\sigma_y(z)) \] 
	is
	\beal  \sigma_x(\sigma_y(z)) &= x^{-1} \cdot (x \circ \sigma_y(z))\\
	&= x^{-1} \cdot (x \circ (y^{-1} \cdot (y \circ z)))\\
	&= x^{-1} \cdot (x \circ y^{-1}) \cdot  x^{-1} \cdot  (x \circ y \circ z) \ \ \ \text{(by (\#))}.\eeal
	By Lemma 1, this is
	\beal	&= (x \circ y)^{-1} \cdot (x \circ y \circ z)  \\
	&= \sigma_{x \circ y}(z).\eeal
\end{proof}

(We note that [GV17] proves that given a set $B$ with two group operations, $\cdot$ and $\circ$, and $\sigma_x(y) = x^{-1}\cdot(x \circ y)$, then for all $x, y, z$ in $B$,
\[\sigma_x(\sigma_y(z)) = \sigma_{x \circ y}(z)  \]
if and only if the compatibility condition $(\#)$ holds, if and only if  $B$ is a skew left brace:  see Proposition 1.9 of [GV17].)

Finally, we prove property (ii):
 
\begin{proposition}\label{p2}  $\tau $ is an anti-homomorphism from $(B, \circ)$ to $\Perm(B)$:
\[\tau_{y \circ z}(x) = \tau_z(\tau_y(x)).\] \end{proposition}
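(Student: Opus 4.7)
My plan is to reduce the claim to a ``$\sigma$-cocycle'' identity
\[\sigma_x(y \circ z) = \sigma_x(y) \circ \sigma_{\tau_y(x)}(z), \qquad (\dagger)\]
and then to prove $(\dagger)$ directly from $(\#)$ and Lemma~\ref{p0}.

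For the reduction I would expand $x \circ y \circ z$ in two ways, using associativity of $\circ$ together with the factorisation $a \circ b = \sigma_a(b) \circ \tau_b(a)$. Parsing as $x \circ (y \circ z)$ and factoring once gives $\sigma_x(y \circ z) \circ \tau_{y \circ z}(x)$; parsing as $(x \circ y) \circ z$ and factoring twice (first $x \circ y$, then $\tau_y(x) \circ z$) gives $\sigma_x(y) \circ \sigma_{\tau_y(x)}(z) \circ \tau_z(\tau_y(x))$. Equating the two expressions and cancelling on the right in the group $(B, \circ)$ reduces the proposition to $(\dagger)$.

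To prove $(\dagger)$, set $u = \tau_y(x)$, so that $\sigma_x(y) \circ u = x \circ y$. Writing $\sigma_u(z) = u^{-1} \cdot (u \circ z)$ and applying $(\#)$ to $\sigma_x(y) \circ (u^{-1} \cdot (u \circ z))$ yields three $\cdot$-factors $(\sigma_x(y) \circ u^{-1}) \cdot \sigma_x(y)^{-1} \cdot (\sigma_x(y) \circ u \circ z)$. The rightmost factor equals $x \circ y \circ z$. The leftmost, rewritten by Lemma~\ref{p0} (in the form $a \circ b^{-1} = a \cdot (a \circ b)^{-1} \cdot a$), becomes $\sigma_x(y) \cdot (x \circ y)^{-1} \cdot \sigma_x(y)$; its trailing $\sigma_x(y)$ cancels the middle $\sigma_x(y)^{-1}$, and $\sigma_x(y) \cdot (x \circ y)^{-1} = x^{-1}$ by the definition of $\sigma$, so the whole expression collapses to $x^{-1} \cdot (x \circ y \circ z) = \sigma_x(y \circ z)$. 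The main obstacle is the bookkeeping of mixing $\cdot$ and $\circ$: Lemma~\ref{p0} is precisely what makes a factor of $\sigma_x(y)$ appear in the right place to cancel the $\sigma_x(y)^{-1}$ produced by $(\#)$, without which the telescoping fails.
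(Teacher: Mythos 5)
Your proposal is correct and takes essentially the same route as the paper: both proofs hinge on the identity $\sigma_x(y \circ z) = \sigma_x(y) \circ \sigma_{\tau_y(x)}(z)$, obtained from $(\#)$ together with Lemma~\ref{p0} applied with $a = \sigma_x(y)$, $b = \tau_y(x)$ (you compute the right-hand side forward and telescope, the paper rearranges the definition of $\sigma$ and applies $(\#)$ in reverse, but it is the same calculation), and then conclude by cancellation in the group $(B, \circ)$. One wording quibble: in your reduction, once $(\dagger)$ is established you cancel the common \emph{left} factor of the two factorisations of $x \circ y \circ z$ to get $\tau_{y \circ z}(x) = \tau_z(\tau_y(x))$, not ``on the right.''
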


\begin{proof}
We begin with the definition of $\sigma_x(q)$:
\[ x^{-1}\cdot(x \circ y) = \sigma_x(y)\]
Rearrange the equation and use that $ x \circ y =\sigma_x(y) \circ \tau_y(x)$, to get: 
\[\sigma_x(y)^{-1}\cdot x^{-1} = (\sigma_x(y) \circ \tau_y(x))^{-1}\]
Apply the Lemma formula,  $(a \circ b)^{-1} = a^{-1}\cdot(a \circ b^{-1})\cdot a^{-1})$ 
to the right side, to get:
\[ \sigma_x(y)^{-1}\cdot x^{-1}= \sigma_x(y)^{-1}\cdot(\sigma_x(y) \circ \tau_y(x)^{-1})\cdot\sigma_x(y)^{-1}\]
Cancel $\sigma_x(y)^{-1}$ on the left and multiply both sides by $\cdot (x \circ y \circ z)$ on the right:
\[x^{-1}\cdot(x \circ y \circ z) = (\sigma_x(y) \circ \tau_y(x)^{-1})\cdot \sigma_x(y)^{-1}\cdot(x \circ y \circ z)\]     
Apply the definition of $\sigma$ to the left side and use that $x \circ y = \sigma_x(y) \circ \tau_y(x)$ on the right side:
\[ \sigma_x(y \circ z) =  (\sigma_x(y) \circ \tau_y(x)^{-1})\cdot \sigma_x(y)^{-1} \cdot ( \sigma_x(y) \circ \tau_y(x) \circ z)\] 
Apply the skew brace formula (\#) (in reverse):
\[ \sigma_x(y \circ z) =  (\sigma_x(y) \circ (\tau_y(x)^{-1}\cdot ( \tau_y(x) \circ z))\]  
Use the definition of $\sigma$ on the far right side:
\[	\sigma_x(y \circ z) = \sigma_x(y) \circ \sigma_{\tau_y(x)}(z) \]
Take the $\circ$-inverse of both sides, and multiply both sides by $ \circ x \circ  y \circ z$:
\[\ol{\sigma_x(y \circ z)} \circ x \circ y \circ z = \ol{\sigma_{\tau_y(x)}(z)} \circ (\ol{\sigma_x(y)} \circ x \circ y) \circ z\] 
Use the definition of $\tau$: $\tau_b(a) = \overline{\sigma_a(b)} \circ a  \circ b$ on the right side:
\[\ol{\sigma_x(y \circ z)} \circ x \circ (y \circ z) = \ol{\sigma_{\tau_y(x)}(z)} \circ \tau_y(x) \circ z,\] 
then on both sides:
\[\tau_{y \circ z}(x) = \tau_z(\tau_y(x))\]   
So $\tau$ is an anti-homomorphism on $(B, \circ)$. \end{proof}

\subsection*{References}
	
\
	
[BCJ16]  D. Bachiller, F. Cedo, E. Jespers, Solutions of the Yang-Baxter equation associated with a left brace, J. Algebra 463 (2016), 80-102. arXiv:1503.02814
	
\

[Ba18]	D. Bachiller, Solutions of the Yang-Baxter equation associated to skew left braces, with applications to racks, J. Knot Theory and its Ramifications 27 (2018). arXiv:1611.08138 

\

[CGK...21]  L. Childs, C. Greither, K. Keating, A. Koch, T. Kohl, P. Truman, R. Underwood, Hopf Algebras and Galois Module Theory, Mathematical Surveys and Monographs v. 260 (2021), American Mathematical Society.

\

[CJO14]  F. Cedo, E. Jespers, J. Okninski, Braces and the Yang-Baxter equation, Comm. Math. Physica 327 (2014), 101--116.  arXiv:1205.3587

\

[Ge92]  V. G. Drinfel'd, On some unsolved problems in quantum group theory, Lecture Notes in Math. vol 1510, Springer, Berlin, 1992, 1--6. 

\
	
[GV17]  L. Guarneri, L. Vendramin, Skew braces and the Yang-Baxter equation, Math. Comp 86 (2017), 2519--2534. arXiv:1511.03171

\

[LYZ03]  J. Lu, M. Yan, Y. Zhu, On the set-theoretical Yang-Baxter equation, Duke Math. J. 104 (2000), 153--170.

\

[Ru07]	W. Rump, Braces, radical rings, and the quantum Yang-Baxter equation, J. Algebra 307 (2007), 153--170.

\

[SV18]  A. Smoktunowicz, L. Vendramin, On skew braces, with an appendix by N. Byott and L. Vendramin, J. Comb. Algebra 2 (2018), 47--86.
\

\end{document}